\DeclareMathOperator*{\argmin}{\arg\min}
\DeclareMathOperator*{\argmax}{arg\,max}
\newtheorem{Theorem}{Theorem}
\newtheorem{Lemma}{Lemma}
\newtheorem{Problem}{Problem}
\newtheorem{Remark}{Remark}
\newtheorem{Assumption}{Assumption}
\newtheorem{Example}{Example}
\newtheorem{Definition}{Definition}
\let\NAT@parse\undefined
\def\@linkcolor{blue}
  \def\@anchorcolor{red}
  \def\@citecolor{red}
  \def\@filecolor{red}
  \def\@urlcolor{red}
  \def\@menucolor{red}
  \def\@pagecolor{red}
  \edef\x{%
    \edef\noexpand\x{%
      \endgroup
      \noexpand\toks@{%
        \catcode 96=\noexpand\the\catcode`\noexpand\`\relax
        \catcode 61=\noexpand\the\catcode`\noexpand\=\relax
      }%
    }%
    \noexpand\x
  }%
\def\BibTeX{{\rm B\kern-.05em{\sc i\kern-.025em b}\kern-.08em
    T\kern-.1667em\lower.7ex\hbox{E}\kern-.125emX}}
\begin{document}

% \articletype{ARTICLE TEMPLATE}% Specify the article type or omit as appropriate

\title{\LARGE{\bf Fixed-Time Convergence for a Class of Nonconvex-Nonconcave Min-Max Problems}}

\author{Kunal Garg \and Mayank Baranwal
\thanks{K.~Garg is with the Department of Aerospace Engineering at the University of Michigan, Ann Arbor, MI 48105: \texttt{kgarg@umich.edu}.}
\thanks{M.~Baranwal is with the Division of Data \& Decision Sciences, Tata Consultancy Services Research, Mumbai, 400607 India e-mail: \texttt{baranwal.mayank@tcs.com}.}
% \thanks{The authors would like to acknowledge the support of the Air Force Office of Scientific Research under award number FA9550-17-1-0284.}
}

\maketitle

%This study develops a fixed-time convergent saddle point dynamical system for solving min-max problems under a relaxation of standard convexity-concavity assumption. In particular, it is shown that by leveraging the dynamical systems viewpoint of an optimization algorithm, accelerated convergence to a saddle point can be obtained. Instead of requiring the objective function to be strongly-convex--strongly-concave (as necessitated for accelerated convergence of several saddle-point algorithms), uniform fixed-time convergence is guaranteed for functions satisfying only the two-sided Polyak-{\L}ojasiewicz (PL) inequality. A large number of practical problems, including the robust least squares estimation, are known to satisfy the two-sided PL inequality. The proposed method achieves arbitrarily fast convergence compared to any other state-of-the-art method with linear or even super-linear convergence, as also corroborated in numerical case studies.

\begin{abstract}
This study develops a fixed-time convergent saddle point dynamical system for solving min-max problems under a relaxation of standard convexity-concavity assumption. In particular, it is shown that by leveraging the dynamical systems viewpoint of an optimization algorithm, accelerated convergence to a saddle point can be obtained. Instead of requiring the objective function to be strongly-convex--strongly-concave (as necessitated for accelerated convergence of several saddle-point algorithms), uniform fixed-time convergence is guaranteed for functions satisfying only the two-sided Polyak-{\L}ojasiewicz (PL) inequality. A large number of practical problems, including the robust least squares estimation, are known to satisfy the two-sided PL inequality. The proposed method achieves arbitrarily fast convergence compared to any other state-of-the-art method with linear or even super-linear convergence, as also corroborated in numerical case studies.
%This paper studies a class of min-max problems under a relaxation of convexity-concavity assumption. Instead of an iterative, discrete-time algorithm, we study this problem under the dynamical systems paradigm and design a modification of saddle-point dynamics. In particular, we design a modification to achieve fixed-time convergence to the solution (saddle point), i.e., the convergence of the dynamical system within a fixed time, in contrast to convergence in an infinite time under the notions of asymptotic or exponential convergence. We illustrate how this faster notion of convergence in the continuous-time domain translates to fixed-number-of-steps convergence in the discrete-time domain under simple discretization schemes, such as Euler discretization. Thus, the proposed method achieves arbitrarily fast convergence compared to any other state-of-the-art method with linear or even super-linear convergence, as also corroborated in numerical case studies. 
\end{abstract}

\section{Introduction}
In this paper, we study the problem of solving an optimization problem of the form
\begin{align}
    \min_x\max_y F(x,y),
\end{align}
where $F$ is not necessarily convex in $x$ and/or concave in $y$. As discussed in \cite{razaviyayn2020nonconvex}, these problems appear in several important applications, such as zero-sum games \cite{fiez2020gradient}, network optimization \cite{feijer2010stability} and various domains of machine learning (ML) including adversarial learning \cite{razaviyayn2020nonconvex, goodfellow2014generative} and fair ML \cite{madras2018learning}, to name a few. Most algorithms to solve such min-max problems are designed and analyzed in the discrete-time domain via iterative methods. However, in recent few years, the study of continuous-time optimization (through dynamical systems) methods has emerged as a viable alternative for studying optimization problems, see e.g., \cite{cherukuri2017saddle}. In particular, such min-max problems are solved using saddle-point dynamics (SPD).
% \cite{wibisono2015accelerated,wibisono2016variational,su2014differential}. 
% Establishing connections between ordinary differential equations (ODEs) and optimization has been an active topic of interest, see \cite{su2014differential,cherukuri2017saddle,dhingra2018proximal} and the references therein. 
% The theory of ODEs offers useful insights into optimization theory and the corresponding techniques \cite{su2014differential}; some of the notable examples, as listed in \cite{su2014differential}, include linear regression via ODEs induced by linearized Bregman iteration algorithm \cite{osher2016sparse} and a continuous-time Nesterov-like accelerated algorithm in the context of control design \cite{durr2012class}. 
The continuous-time perspective of optimization problems provides simple and elegant proof techniques for the convergence of solutions to the equilibrium points using Lyapunov stability theory \cite{garg2018fixed}. 
% this has led to further studies in unconstrained optimization \cite{brown1989some,helmke2012optimization}, constrained optimization \cite{schropp2000dynamical}, and more recently, saddle-point dynamics \cite{cherukuri2017saddle,cherukuri2017role}. 

It is worth noticing that while there is much work on continuous-time optimization, most of it addresses asymptotic or exponential convergence of the solutions to the optimal point, i.e., convergence as time tends to infinity; for an overview, see \cite{cherukuri2017saddle,wibisono2016variational}. Furthermore, 
% In this paper, novel continuous-time optimization schemes are developed that possess fixed-time convergence guarantees, i.e., guarantees that the solutions of the considered ODEs converge to the optimal point of the corresponding optimization problem within a fixed time independent of the initial conditions.
% There is a plethora of work on asymptotic convergence analysis of GF; for an overview, see \cite{wibisono2015accelerated,wibisono2016variational}. Recent work, for example, \cite{su2014differential}, has focused on the exponential stability of the GF-based methods.
the \textit{strong} or \textit{strict} convexity-concavity of the objective function is a standard assumption for exponential stability in min-max problems. In \cite{cherukuri2017saddle}, the authors discuss the conditions under which the SPD exhibits global asymptotic convergence. In \cite{qu2019exponential,dhingra2018proximal}, the authors show global exponential stability of the gradient-based method for primal-dual gradient dynamics under a strong convexity-concavity assumption. For convex optimization (minimization) problems, as shown in \cite{karimi2016linear}, the condition can be relaxed by assuming that the objective function satisfies the  \textit{Polyak-{\L}ojasiewicz inequality} (PL inequality), i.e., the objective function is \textit{gradient dominated}. The authors in \cite{yang2020global} (see also \cite{dassampling}) extend the notion of PL inequality for min-max functions by introducing two-sided PL inequality, which is a relaxation of convexity-concavity assumption on the objective function. They study the problem in the discrete-time domain and show the linear rate of convergence (equivalent of exponential convergence in continuous time).

More recently, faster notions of stability of dynamical systems, such as finite-time stability (see \cite{bhat2000finite}) and fixed-time stability (FxTS, see \cite{polyakov2012nonlinear}) have become popular in designing methods of solving optimization problems with an accelerated convergence rate. Our prior efforts in this direction have led to development of fixed-time convergent optimization algorithms for various optimization problems~\cite{garg2018fixed,garg2020fixed,garg2021MVIP}. This study shows that fixed-time convergence of SPD can still be guaranteed without requiring the objective function to be strongly-convex--strongly-concave. In particular, we study the min-max problem under the relaxed two-sided PL inequality and design modified SPD with fixed-time convergence to the saddle point (i.e., the solution of the min-max problem). To the best of the authors' knowledge, this is the first work on a fixed-time stable dynamical systems-based algorithm for solving min-max problems under this relaxed assumption. Moreover, the stronger requirement for Lipschitzness~\cite{yang2020global,dassampling} of the objective function is further relaxed to functions with bounded mixed-derivatives. Thus, the proposed work extends naturally to a larger class of min-max problems, while guaranteeing fastest uniform convergence to the saddle point.

\section{Problem Formulation and Preliminaries}\label{sec:prob_form}
% \subsection{Notation}
\noindent \textbf{Notation}: The set of reals is denoted by $\mathbb R$. The Euclidean norm of $x\in \mathbb R^n$ is denote by $\|x\|$, and its transpose, by $x^\intercal$. For a given function $F:\mathbb R^n\times\mathbb R^m\rightarrow\mathbb R$, $F^*$ denotes the min-max value of the objective function and $(x^*,y^*)$ denotes the saddle point, i.e., $F(x^*,y^*) = F^*$. The notation $f\in C^k(U,V)$ is used for a function $f:U\rightarrow V$, $U\subseteq \mathbb R^n, V\subseteq \mathbb R^m$ which is $k-$times continuously differentiable. For a multivariate function $F\in C^2(\mathbb R^n\times\mathbb R^m,\mathbb R)$, the partial derivatives are denoted as $\nabla_{x} F(x,y) \triangleq \frac{\partial F}{\partial x}(x,y)$ and $\nabla_{x,y} F(x,y) \triangleq \frac{\partial^2 F}{\partial x \partial y}(x,y)$, where $x\in \mathbb R^{n}, y\in \mathbb R^{m}$. 

In this paper, min-max problems are considered that can be formulated as saddle-point problem for a given $F:\mathbb R^n\times\mathbb R^m\rightarrow\mathbb R$. Formally, this can be stated as 
\begin{equation}\label{min max problem}
    \min_{x\in \mathbb R^n}\max_{y\in \mathbb R^m} F(x,y).
\end{equation}
A point $(x^*,y^*)$ is called as \textit{local saddle-point} of $F$ (as well as local optimal solution of \eqref{min max problem}), if there exist open neighborhoods $U_x\subset \mathbb R^n$ and $U_y\subset \mathbb R^m$ of $x^*$ and $y^*$, respectively, such that for all $(x,y)\in U_x\times U_y$, one has
\begin{equation}
    F(x^*,y) \leq F(x^*,y^*) \leq F(x,y^*).
\end{equation}
The point $(x^*,y^*)$ is \textit{global saddle-point} if $U_x = \mathbb R^n$ and $U_y = \mathbb R^m$. The main problem considered in this paper is as follows.

\begin{Problem}
Given a function $F$ and a user-defined time $0<T<\infty$, design a dynamical system-based algorithm to solve \eqref{min max problem} such that the equilibrium point of the dynamical system is a solution $(x^*, y^*)$ of the problem \eqref{min max problem} and the trajectories of the dynamical system reach $(x^*, y^*)$ within the user-defined time $T$ for each initial condition $(x(0), y(0))\in \mathbb R^n\times\mathbb R^m$.
% \begin{itemize}
%     \item[1)] The equilibrium point of the dynamical system is a solution of the problem \eqref{min max problem};
%     \item[2)] The solution of the dynamical system reaches its equilibrium point within the user-defined time $T$.
% \end{itemize}
\end{Problem}

First, we provide an overview of stability theory of dynamical systems that are used later in our analysis. Consider the system
\begin{equation}\label{ex sys}
\dot x = h(x),
\end{equation}
where $x\in \mathbb R^n$, $h: \mathbb R^n \rightarrow \mathbb R^n$ and $h(0)=0$. Assume that the solution to \eqref{ex sys} exists, is unique, and continuous for any $x(0)\in \mathbb R^n$, for all $t\geq 0$. 

% Under the notion of FTS, the settling time $T$ depends upon the initial condition $x(0)$ and grows unbounded as $\|x(0)\|$ increases. 
%First, we review the notion of FxTS that requires that the solutions of \eqref{ex sys} converge to the origin within a fixed time, independent of the initial condition.

\begin{Definition}[\hspace{-0.1pt}\cite{polyakov2012nonlinear}]
The origin is said to be a fixed-time stable equilibrium of \eqref{ex sys} if it is Lyapunov stable and there exists $ T<\infty$ such that $\lim\limits_{t\to T}x(t) = 0$ for each $x(0)\in \mathbb R^n$. 
\end{Definition}

\begin{Lemma}[\hspace{-0.1pt}\cite{polyakov2012nonlinear}]\label{FxTS TH}
Suppose there exists a positive definite function $V\in C^1(\mathcal D,\mathbb R)$, where $\mathcal D\subset\mathbb R^n$ is a neighborhood of the origin, for system \eqref{ex sys} such that 
\begin{equation}\label{dot V eq}
    \dot V(x) \leq -pV(x)^{\alpha}-qV(x)^{\beta}, \; \forall x\in \mathcal D\setminus\{0\},
\end{equation}
with $p,q>0$, $0<\alpha<1$ and $\beta>1$. Then, the origin of \eqref{ex sys} is FxTS with settling time (time of convergence) $T \leq \frac{1}{p(1-\alpha)} + \frac{1}{q(\beta-1)}$.
\end{Lemma}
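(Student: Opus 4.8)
The plan is to reduce the multivariate stability question to a scalar differential inequality along trajectories and then bound the resulting settling-time integral \emph{uniformly} in the initial condition. First I would establish Lyapunov stability of the origin: since $V$ is positive definite and the hypothesis gives $\dot V(x) \leq -pV(x)^\alpha - qV(x)^\beta < 0$ for all $x \in \mathcal D \setminus \{0\}$, the standard Lyapunov theorem yields that the origin is (locally) Lyapunov stable, so trajectories starting sufficiently close to the origin remain in $\mathcal D$. This disposes of the stability half of the fixed-time stability definition, leaving only the finite-settling-time claim.

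For the latter, I would set $v(t) := V(x(t))$ along a trajectory, so that $\dot v(t) \leq -p\,v(t)^\alpha - q\,v(t)^\beta$. The natural tool is the comparison lemma: letting $w(t)$ solve the scalar autonomous ODE $\dot w = -p\,w^\alpha - q\,w^\beta$ with $w(0) = v(0)$, one obtains $0 \le v(t) \le w(t)$ for as long as both are defined (setting $w \equiv 0$ once it reaches zero). It therefore suffices to bound the time for $w$ to reach zero by $T$, since this upper-bounds the time for $v = V(x(t))$, hence $x(t)$, to reach the origin.

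The core computation is to separate variables and bound the time-to-zero integral
\[
T_w = \int_0^{w(0)} \frac{dw}{p\,w^\alpha + q\,w^\beta}.
\]
I would split this at $w = 1$. On $(0,1)$ the term $q\,w^\beta$ is nonnegative, so the integrand is at most $\tfrac{1}{p}\,w^{-\alpha}$, and since $\alpha < 1$ we get $\int_0^1 \tfrac{1}{p}\,w^{-\alpha}\,dw = \tfrac{1}{p(1-\alpha)}$. On $(1,w(0))$ the term $p\,w^\alpha$ is nonnegative, so the integrand is at most $\tfrac{1}{q}\,w^{-\beta}$, and since $\beta > 1$ we may extend the upper limit to infinity to get the worst-case bound $\int_1^\infty \tfrac{1}{q}\,w^{-\beta}\,dw = \tfrac{1}{q(\beta-1)}$. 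Adding the two pieces yields $T_w \le \tfrac{1}{p(1-\alpha)} + \tfrac{1}{q(\beta-1)}$ for every $w(0)$, which is exactly the claimed bound.

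The crucial and distinguishing step — the one separating fixed-time from merely finite-time stability — is the \emph{uniformity} of this bound in the initial condition, and the main obstacle lies precisely in getting both tail integrals to converge. Convergence near $w = 0$ forces $\alpha < 1$, while convergence as $w(0) \to \infty$ forces $\beta > 1$; these two exponent conditions are exactly the hypotheses of the lemma, and they explain why the $q\,w^\beta$ term is indispensable for taming arbitrarily large initial conditions. The remaining care is routine: invoking the comparison principle legitimately using continuity of the right-hand side and nonnegativity of $v$, and noting that once $V(x(t)) = 0$, positive definiteness of $V$ forces $x(t) = 0$, after which $h(0) = 0$ keeps the trajectory at the origin.
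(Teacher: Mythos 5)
The paper does not prove this lemma at all---it is quoted directly from the cited reference \cite{polyakov2012nonlinear}---and your argument correctly reconstructs the standard proof given there: reduce to the scalar differential inequality via the comparison principle, separate variables, and split the settling-time integral $\int_0^{V(x_0)} \frac{dv}{pv^{\alpha}+qv^{\beta}}$ at $v=1$, bounding the lower piece by $\frac{1}{p(1-\alpha)}$ and the upper piece (extended to $\infty$) by $\frac{1}{q(\beta-1)}$. Your proposal is correct, uniform in the initial condition as required, and matches the approach of the cited source, so there is nothing to flag.
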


The local strong or strict convexity-concavity assumption is very commonly used in literature for showing asymptotic convergence of saddle-point dynamics to the optimal solution of \eqref{min max problem} (see, e.g., \cite{cherukuri2017saddle}). Authors in \cite{cherukuri2017saddle} use the following nominal saddle-point dynamics (SPD):
\begin{equation}\label{SP asym dyn}
    \dot x = -\nabla F_x(x,y), \quad
    \dot y  = \nabla F_y(x,y).
\end{equation}
and show asymptotic convergence to the saddle-point $(x^*, y^*)$ under the strong and strict convexity-concavity assumption. 

In the context of minimization problems, the requirement of strong convexity for accelerated optimization can be relaxed to a class of potential non-convex functions satisfying PL inequality~\cite{karimi2016linear}. The notion of \textit{gradient-dominance} or Polyak-{\L}ojasiewicz (PL) inequality has been explored extensively in optimization literature to show exponential convergence. A function $f:\mathbb R^n\rightarrow\mathbb R$ is said to satisfy PL inequality, or is gradient dominated, with $\mu_f>0$ if $\frac{1}{2}\|\nabla f(x)\|^2\geq \mu_f(f(x)-f^*)$ for all $x \in \mathbb R^n$, where $f^* = f(x^*)$ is the value of the function at its minimizer $x^*$. It is easy to show that if a function $f:\mathbb R^m\rightarrow \mathbb R$ is strongly convex, then the function $g:\mathbb R^n\rightarrow \mathbb R$, defined as $g(x) = f(Ax)$, $A\in \mathbb R^{n\times m}$, then $g$ may not be strongly convex if the matrix $A$ is not full row-rank. On the other hand, as shown in \cite[Appendix 2.3]{karimi2016linear}, $g$ still satisfies PL inequality for any matrix $A$. Below, an example of an important class of problems is given for which, the objective function satisfies PL inequality (see \cite{karimi2016linear} for more examples on useful functions that satisfy PL inequality).

\begin{Example}[\textbf{Least squares}]\label{Example: PL inequality} Consider the problem 
\begin{equation}\label{PL example 1}
    \min_{x\in \mathbb R^n} f(Ax) = \|Ax-b\|^2,
\end{equation}
where $x\in \mathbb R^n, A\in \mathbb R^{n\times n}$ and $b\in \mathbb R^n$. Here, the function $f(x) = \|x-b\|^2$ is strongly-convex, and hence, $g(x) = \|Ax-b\|^2$ satisfies PL inequality for any matrix $A$.
\end{Example}

The objective function in \eqref{PL example 1} satisfies PL inequality but need not be strongly convex for any matrix $A$. This is an important class of functions in machine learning problems. Similarly, a notion of PL inequality for min-max functions is introduced in \cite{yang2020global} and is termed two-sided PL inequality. 

\begin{Definition}[\textbf{Two-sided PL inequality}]\label{def: two-sided PL} A continuously differentiable function $F:\mathbb R^n\times\mathbb R^m\rightarrow \mathbb R$ is said to satisfy \textnormal{two-sided PL inequality} if there exist constants $\mu_1, \mu_2>0$ such that for all $x\in \mathbb R^n$ and $y\in \mathbb R^m$,
\begin{subequations}
\begin{align}
\|\nabla_xF(x,y)\|^2 & \geq 2\mu_1 (F(x,y)- \min_xF(x,y)),  \\    
\|\nabla_yF(x,y)\|^2 & \geq 2\mu_2 (\max_y F(x,y)-F(x,y)).
\end{align}
\end{subequations}
\end{Definition}

Below, we give an example of an important class of functions that are not strongly convex-concave but satisfy two-sided PL inequality. 

\begin{Example}
[\textbf{Robust Least Squares}]\label{ex:RLS} Consider the problem
\begin{align}
    \min_x\max_y F(x,y) \coloneqq \|Ax-y\|_M^2 -\lambda \|y-y_0\|_M^2,
\end{align}
where $M\in \mathbb R^{n\times n}$ is a positive semi-definite matrix and  $\|x\|_M\coloneqq \sqrt{x^\intercal Mx}$.  It can be easily shown that $F$ is not strongly convex in $x$ and when $M$ is not full rank, it is not strongly concave in $y$. This is an important class of functions used in formulating least squares under uncertain data \cite{el1997robust}. 
\end{Example}

Per the results in \cite{karimi2016linear}, we propose the result on quadratic-growth of the function $F$ under two-sided PL inequality. 
% \begin{Lemma}\label{conj: QB}
% Given a function $F:\mathbb R^n\times\mathbb R^m\rightarrow\mathbb R$ satisfying two-sided PL inequality for some constants $\mu_1, \mu_2>0$, the following hold
% \begin{align}
% (F(x,y)- \min_xF(x,y))\geq \frac{\mu_1}{2}\|x-\overline{x}(y)\|,  \\    
% (\max_y F(x,y)-F(x,y))\geq \frac{\mu_2}{2} \|z-\overline{y}(x)\|,
% \end{align}
% where $\overline{y}(x) = \arg\max_yF(x,y)$ and $\overline{x}(y) = \arg\min_xF(x,y)$. Equivalently, it follows that
% \begin{align}
%     \|\nabla_xF(x,y)\| & \geq \mu_1 \|x-\overline{x}(y)\|\\
%     \|\nabla_yF(x,y)\| & \geq \mu_2 \|z-\overline{y}(x)\|,
% \end{align}
% for all $x, z\in \mathbb R^n\times\mathbb R^m$.
% \end{Lemma}

\begin{Lemma}\label{conj: QB}
    Let $F:\mathbb{R}^n\times\mathbb{R}^m\to\mathbb{R}$ be a function satisfying the two-sided PL inequality with modulii $\mu_1,\mu_2>0$. Then for all $(x,y)\in\mathbb{R}^n\times\mathbb{R}^m$, the following hold
    \begin{subequations}
    \begin{align}
        \|\nabla_xF(x,y)\|&\geq\mu_1\|x-\overline{x}(y)\| \label{eq:Ineq1}\\
        \|\nabla_yF(x,y)\|&\geq\mu_2\|y-\overline{y}(x)\|, \label{eq:Ineq2}
    \end{align}
    \end{subequations}
    where $\overline{x}(y)\coloneqq\argmin_x F(x,y)$, $\overline{y}(x)\coloneqq\argmax_y F(x,y)$.
\end{Lemma}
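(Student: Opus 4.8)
The plan is to derive each inequality by pairing the hypothesised PL inequality with a \emph{quadratic-growth} (QG) estimate that it implies, following the PL$\Rightarrow$QG route of \cite{karimi2016linear}. By symmetry it suffices to prove \eqref{eq:Ineq1}: the bound \eqref{eq:Ineq2} then follows verbatim after replacing $F$ by $-F$ and interchanging the roles of $\min$ and $\max$, so that $-F$ obeys the scalar PL inequality in $y$ with modulus $\mu_2$ and $\overline y(x)=\argmax_y F(x,y)$ plays the role of its minimizer.

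Fix $y\in\mathbb R^m$ and set $g(x)\coloneqq F(x,y)$, so that $g^{*}\coloneqq\min_x g(x)=g(\overline x(y))$. The first part of the two-sided PL inequality states precisely that $g$ satisfies the one-variable PL inequality $\|\nabla g(x)\|^2\ge 2\mu_1\big(g(x)-g^{*}\big)$ for all $x$. First I would establish the QG estimate
\begin{align}\label{eq:QGstep}
    g(x)-g^{*}\;\ge\;\frac{\mu_1}{2}\,\|x-\overline x(y)\|^2 .
\end{align}
To do so I consider the gradient flow $\dot z=-\nabla g(z)$ with $z(0)=x$. Along it $\frac{d}{dt}\big(g(z)-g^{*}\big)=-\|\nabla g(z)\|^2\le-2\mu_1\big(g(z)-g^{*}\big)$, so $g(z(t))-g^{*}$ decays exponentially to $0$. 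Using the PL inequality in the form $\sqrt{g(z)-g^{*}}\le\tfrac{1}{\sqrt{2\mu_1}}\|\nabla g(z)\|$ gives $\frac{d}{dt}\sqrt{g(z)-g^{*}}\le-\sqrt{\mu_1/2}\,\|\dot z\|$; integrating from $0$ to $\infty$ bounds the arc length of the trajectory by $\int_0^{\infty}\|\dot z\|\,dt\le\sqrt{2/\mu_1}\,\sqrt{g(x)-g^{*}}$. Finiteness of this length shows that $z(t)$ converges to a single limit $z_\infty$, necessarily a minimizer since $g(z_\infty)=g^{*}$, and the displacement is controlled by the length, $\|x-z_\infty\|\le\int_0^{\infty}\|\dot z\|\,dt$. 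Taking $\overline x(y)$ to be the minimizer nearest to $x$ then yields \eqref{eq:QGstep}.

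Once \eqref{eq:QGstep} is in hand, the lemma is immediate: chaining the PL inequality with the QG bound gives
\[
    \|\nabla_x F(x,y)\|^2=\|\nabla g(x)\|^2\ge 2\mu_1\big(g(x)-g^{*}\big)\ge \mu_1^{2}\,\|x-\overline x(y)\|^2 ,
\]
and taking square roots produces \eqref{eq:Ineq1}. I expect the QG step to be the crux. The delicate points there are proving that the gradient flow converges to a \emph{single} point of the solution set and that the Euclidean displacement is dominated by the arc length, together with the fact that PL alone does not guarantee a unique minimizer, so $\overline x(y)$ must be read as the projection of $x$ onto the set of minimizers (equivalently, uniqueness of the inner optimizer is assumed). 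Once QG is established, the combination with PL and the symmetric treatment of the $y$-variable are routine.
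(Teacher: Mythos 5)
Your proposal is correct and follows essentially the same route as the paper's proof: both establish the quadratic-growth bound $F(x,y)-F(\overline{x}(y),y)\geq\frac{\mu_1}{2}\|x-\overline{x}(y)\|^2$ via a gradient-flow arc-length argument (the paper flows along $-\nabla_x\sqrt{F(x,y)-F(\overline{x}(y),y)}$, you flow along $-\nabla_x F(x,y)$; these trace the same paths up to time reparameterization and yield the identical length bound), and then chain quadratic growth with the PL inequality to conclude. Your treatment is in fact slightly more careful than the paper's on two points it glosses over: convergence of the flow to a \emph{single} limit point via finiteness of arc length, and the reading of $\overline{x}(y)$ as the nearest minimizer when the inner problem has non-unique solutions.
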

\noindent The proof is given in Appendix \ref{app: proof Lemma QB}.   

\section{Main results}
In this section, we present the main results on asymptotic stability of the \textit{nominal} SPD \eqref{SP asym dyn} and FxTS of a modified SPD (defined later) to the saddle-point $(x^*, y^*)$. To this end, we make the following assumption on the function $F$. 

\begin{Assumption}\label{asuum: F}
There exist $\mu_1, \mu_2>0$ such that the function $F\in C^{1,1}(\mathbb R^n\times\mathbb R^m,\mathbb R)$ satisfies the two-sided PL inequality with $\mu_1, \mu_2$, and there exists $0\leq c< \min\{\frac{\mu_1}{2}, \frac{\mu_2}{2}\}$ such that the gradients $\nabla_{x} F(x,\cdot)$ and $\nabla_y F(\cdot,y)$ are Lipschitz continuous with constant $c$, for all $(x,y)\in \mathbb R^n\times\mathbb R^m$. 
\end{Assumption}
\begin{Remark}
    Lipschitz continuity of $\nabla_{x} F(x,y)$ w.r.t. $y$, and of $\nabla_y F(x,y)$ w.r.t. $x$ with constant $c$ translates to $\|\nabla_{x,y} F(x,y)\|\leq c$. This is far less restrictive than requiring all first-order derivatives to be Lipschitz continuous, as required in the analysis reported in~\cite{yang2020global,dassampling}. For instance, consider the function $F(x,y)\coloneqq x^2+x^4-y^2-y^4$. Although, the partial derivatives $\nabla_x F(\cdot,y)$ and $\nabla_y F(x,\cdot)$ are not Lipschitz continuous, the function satisfies Assumption~\ref{asuum: F}.
\end{Remark}

We now analyze the stability of the nominal-SPD.
\begin{Theorem}
Consider the SPD \eqref{SP asym dyn} and assume that the function $F$ satisfies Assumption \ref{asuum: F}. Then, the saddle-point $(x^*, y^*)$ is an asymptotically stable equilibrium of \eqref{SP asym dyn}. 
\end{Theorem}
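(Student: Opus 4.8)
The plan is to prove asymptotic stability via a single Lyapunov function built from the two value functions that already appear in the two-sided PL inequality. I would set $P(y)\coloneqq\min_x F(x,y)$ and $D(x)\coloneqq\max_y F(x,y)$ and take the candidate
\begin{equation*}
    V(x,y) \coloneqq D(x) - P(y) = \big(D(x)-F^*\big) + \big(F^*-P(y)\big).
\end{equation*}
Restricting to the saddle neighborhood $U_x\times U_y$, the saddle inequalities give $D(x)\geq F(x,y^*)\geq F^*$ and $P(y)\leq F(x^*,y)\leq F^*$, so each bracket is nonnegative and $V\geq 0$, with $V=0$ precisely when $x=\overline x(y)$ and $y=\overline y(x)$, i.e. at a saddle point. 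Rewriting $V=G_1+G_2$ with $G_1(x,y)\coloneqq F(x,y)-\min_x F(x,y)$ and $G_2(x,y)\coloneqq\max_y F(x,y)-F(x,y)$ exposes it as the sum of the two PL gaps, which is the form I will use to close the argument.

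Next I would differentiate $V$ along \eqref{SP asym dyn}. By the envelope (Danskin) theorem the inner gradient vanishes at the optimizer, so $\nabla P(y)=\nabla_y F(\overline x(y),y)$ and $\nabla D(x)=\nabla_x F(x,\overline y(x))$, giving (with gradients evaluated at $(x,y)$ unless indicated)
\begin{equation*}
    \dot V = -\nabla_x F(x,\overline y(x))^\intercal \nabla_x F(x,y) - \nabla_y F(\overline x(y),y)^\intercal \nabla_y F(x,y).
\end{equation*}
Splitting $\nabla_x F(x,\overline y(x)) = \nabla_x F(x,y) + \big(\nabla_x F(x,\overline y(x))-\nabla_x F(x,y)\big)$ (and symmetrically in $y$), the diagonal parts yield $-\|\nabla_x F\|^2-\|\nabla_y F\|^2$, while the mismatch terms are controlled by the bounded mixed derivative $\|\nabla_{x,y}F\|\leq c$ together with the quadratic-growth bounds of Lemma \ref{conj: QB}: namely $\|\nabla_x F(x,\overline y(x))-\nabla_x F(x,y)\|\leq c\|y-\overline y(x)\|\leq \tfrac{c}{\mu_2}\|\nabla_y F(x,y)\|$, and likewise $\|\nabla_y F(\overline x(y),y)-\nabla_y F(x,y)\|\leq \tfrac{c}{\mu_1}\|\nabla_x F(x,y)\|$. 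Collecting terms via Cauchy--Schwarz gives
\begin{equation*}
    \dot V \leq -\|\nabla_x F\|^2 - \|\nabla_y F\|^2 + c\Big(\tfrac{1}{\mu_1}+\tfrac{1}{\mu_2}\Big)\|\nabla_x F\|\,\|\nabla_y F\|.
\end{equation*}

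Finally I would apply Young's inequality, $\|\nabla_x F\|\,\|\nabla_y F\|\leq \tfrac12(\|\nabla_x F\|^2+\|\nabla_y F\|^2)$, to obtain $\dot V\leq -\kappa(\|\nabla_x F\|^2+\|\nabla_y F\|^2)$ with $\kappa = 1-\tfrac{c}{2}(\tfrac1{\mu_1}+\tfrac1{\mu_2})$. The hypothesis $c<\min\{\tfrac{\mu_1}{2},\tfrac{\mu_2}{2}\}$ forces $\tfrac{c}{\mu_1},\tfrac{c}{\mu_2}<\tfrac12$, hence $\kappa>\tfrac12>0$. Invoking Definition \ref{def: two-sided PL}, $\|\nabla_x F\|^2+\|\nabla_y F\|^2\geq 2\mu_1 G_1+2\mu_2 G_2\geq 2\min\{\mu_1,\mu_2\}\,V$, so $\dot V\leq -2\kappa\min\{\mu_1,\mu_2\}\,V$, and $V$ decays (exponentially) to zero, yielding asymptotic stability of $(x^*,y^*)$. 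The main obstacle is the regularity required to make the middle step rigorous: the envelope-theorem differentiation of $D$ and $P$ needs the value functions to be differentiable even though PL does not guarantee a unique inner optimizer, and concluding for the specific equilibrium requires $V$ to be positive definite about $(x^*,y^*)$, i.e. local isolation of the saddle point. I would handle both by restricting to a neighborhood on which Lemma \ref{conj: QB} pins $\overline x(y)$ and $\overline y(x)$ down to locally Lipschitz selections and on which the saddle point is the unique critical point.
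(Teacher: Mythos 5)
Your proposal is correct and takes essentially the same route as the paper's own proof: the same duality-gap Lyapunov function $V=\max_y F(x,y)-\min_x F(x,y)$ (the paper merely scales it by $2$), the same Danskin-type differentiation through $\overline{x}(y)$ and $\overline{y}(x)$, the same control of the cross terms via the mixed-derivative bound $c$ together with Lemma~\ref{conj: QB}, Young's inequality, and a final application of the two-sided PL inequality to reach $\dot V\leq-\mathrm{const}\cdot V$. If anything, you are more explicit than the paper about the two regularity caveats (differentiability of the value functions and positive definiteness of $V$ relative to the specific point $(x^*,y^*)$ rather than the set of saddle points), which the paper dismisses with ``it can be readily shown.''
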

\begin{proof}
Consider the SPD under PL inequality and a candidate Lyapunov function $V:\mathbb R^n\times\mathbb R^m\rightarrow\mathbb R$ defined as $V(x,y) \coloneqq 2(\max_yF(x,y)-\min_xF(x,y))$. Per Definition \ref{def: two-sided PL}, it holds that{\small
\begin{align*}
    V(x,y) \leq &  \frac{1}{\mu_1}\|\nabla_x F(x,y)\|^2 + \frac{1}{\mu_2}\|\nabla_y F(x,y)\|^2\leq  \frac{1}{\mu}\|\nabla F(x,y)\|^2,
\end{align*}}\normalsize
where $\mu = \min\{\mu_1, \mu_2\}$. Thus, it holds that $V(x^*,y^*) = 0$. Also, it can be readily shown that $V(x,y)>0$ for all $(x,y)\neq (x^*,y^*)$, i.e., $V$ is positive definite. Note that $V$ can be re-written as
\begin{align*}
    V\!=\! &\left(\!\max_yF(x,y)\!-\!F(x^*\!,y^*)\!\right)\!+\!\left(\!\max_yF(x,y)\!-\!F(x,y)\!\right)\\
    &\left(F(x^*,y^*)\!-\!\min_xF(x,y)\right)\!+\!\left(F(x,y)\!-\!\min_xF(x,y)\right)\!,
\end{align*} 
Let $\overline{y}(x) = \arg\max_yF(x,y)$ and $\overline{x}(y) = \arg\min_xF(x,y)$. The time derivative of $V$ reads
\begin{align*}
    \dot V = & 2\nabla_xF(x,\overline{y}(x))\dot x-\nabla_xF(x,y)\dot x-\nabla_yF(x,y)\dot z\\
    & -(2\nabla_yF(\overline{x}(y),y)\dot z-\nabla_xF(x,y)\dot x-\nabla_yF(x,y)\dot y)\\
     = & -\|\nabla F(x,y)\|^2 + 2(\nabla_xF(x,\overline{y}(x))-\nabla_xF(x,y))\dot x \\
    & + 2(\nabla_yF(\overline{x}(y),y)-\nabla_yF(x,y))\dot z\\
    \leq & -\|\nabla F(x,y)\|^2 + \frac{2}{\mu_1} c\|\nabla_xF\|\|\overline{y}(x)-y\|\\
    & + \frac{2}{\mu_2} c\|\nabla_yF\|\|\overline{x}(y)-x\|.
\end{align*}
Using Lemma \ref{conj: QB}, we obtain that $\dot V \leq  -\left(1-\frac{2c}{\mu}\right)\|\nabla F(x,y)\|^2$.
% Let $c = \sup_{x,y}\|\nabla_{x,y}\|F(x,y)\|$. 
Under Assumption \ref{asuum: F}, we have $\|\nabla_{x,y}F(x,y)\|\leq c$ and $\mu>2c$. We obtain that $\dot V(x,y) \leq -\alpha V(x,y)$, where $\alpha = (\mu-2c)>0$. Thus, $(x^*,y^*)$ is asymptotically stable for \eqref{SP asym dyn}. 
\end{proof}

\begin{Remark}
As also noted in \cite{hassan2019proximal} for proximal flows under PL inequality, it is not generally possible to show exponential stability since, in general, the Lyapunov function $V$ is not upper-bounded by a quadratic error $\|x-x^*\|+\|y-y^*\|$. 
% However, if we assume that this bound holds, then it follows that the saddle-point is exponentially stable for \eqref{SP asym dyn} under two-sided PL inequality. 
\end{Remark}

% \subsection{FxTS SP dynamics under}
Inspired from \cite{garg2018fixed}, consider the following FxTS saddle-point dynamics (FxTS SPD)
\begin{equation}\label{SP first order}
\begin{split}
    \begin{bmatrix}\dot x\\\dot y\end{bmatrix} &= -c_1\frac{\tilde \nabla F(x,y)}{\|\nabla F(x,y)\|^\frac{p_1-2}{p_1-1}}-c_2\frac{\tilde \nabla F(x,y)}{\|\nabla F(x,y)\|^\frac{p_2-2}{p_2-1}},\\
\end{split}
\end{equation}
where $c_1, c_2>0$, $p_1>2, 1<p_2<2$, $\tilde \nabla F(x,y) \triangleq \begin{bmatrix}\nabla_x F(x,y)^\intercal & -\nabla_y F(x,y)^\intercal\end{bmatrix}^\intercal$. Note that \eqref{SP asym dyn} is a special case of \eqref{SP first order} with $c_1 = 1, c_2 = 0$ and $p_1 = 2$. The following result can be readily stated for \eqref{SP first order}.

\begin{figure*}[!ht]
	\begin{center}
		\begin{tabular}{ccc}
		\includegraphics[width=0.5\columnwidth]{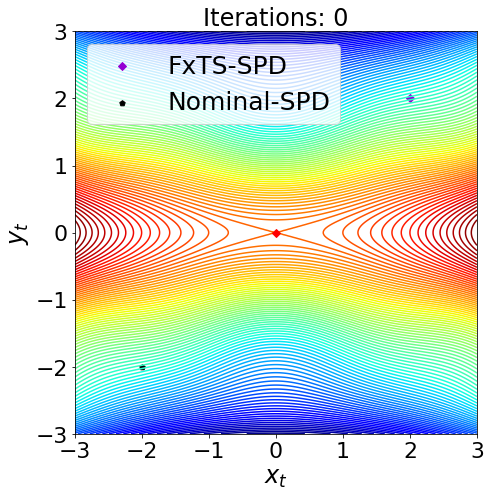}&
			\includegraphics[width=0.5\columnwidth]{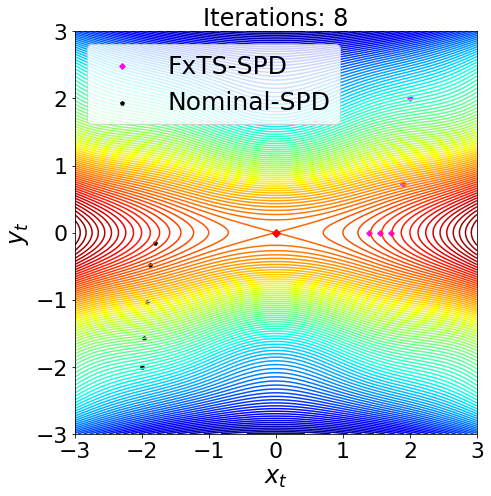}&\includegraphics[width=0.5\columnwidth]{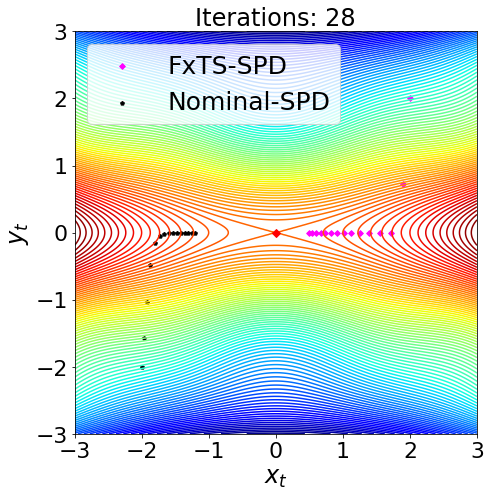}\cr
			\includegraphics[width=0.5\columnwidth]{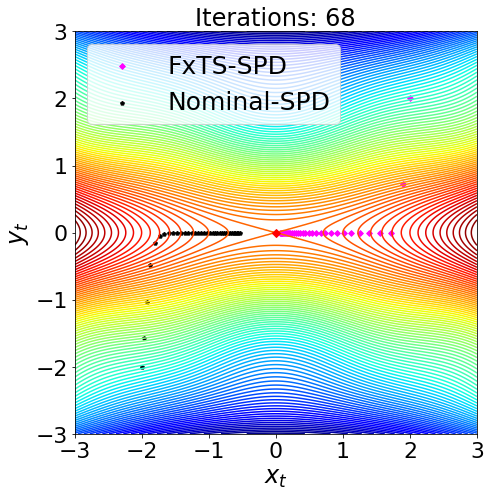}&
			\includegraphics[width=0.5\columnwidth]{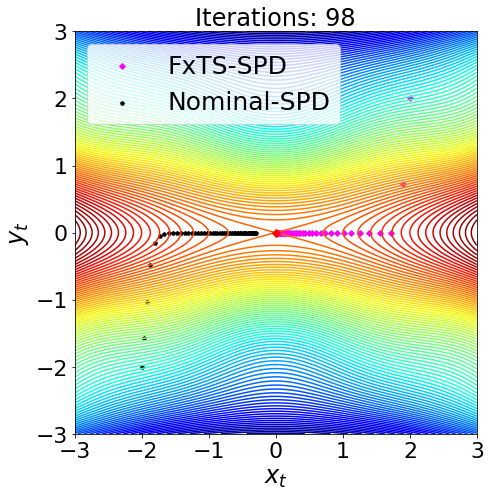}&\includegraphics[width=0.5\columnwidth]{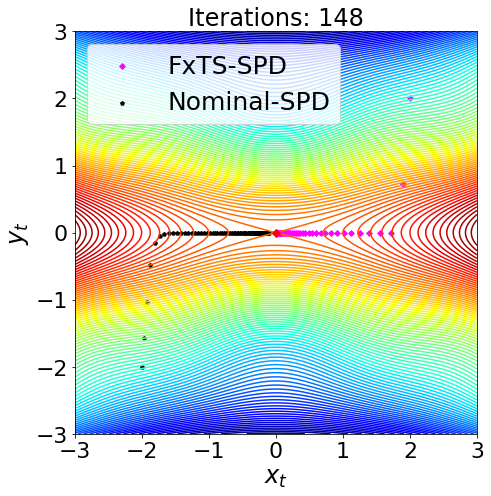}
		\end{tabular}
		\caption{Snapshots of the trajectories of the FxTS-SPD (magenta) amd the nominal-SPD (black) at various iterations.}
		\label{fig:anim}
	\end{center}
	\vspace{-1.5em}
\end{figure*}

\begin{Theorem}\label{thm: FxTS SP PL}
Suppose the function $F$ satisfies Assumption \ref{asuum: F}. Then, the trajectories of \eqref{SP first order} converge to the saddle-point in a fixed time $T<\infty$ for all $(x(0), y(0))\in\mathbb R^n\times\mathbb R^m$. 
\end{Theorem}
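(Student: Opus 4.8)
The plan is to reuse the Lyapunov function $V(x,y) = 2(\max_y F(x,y) - \min_x F(x,y))$ from the proof of Theorem 1 and to show that along \eqref{SP first order} it obeys a differential inequality of exactly the form required by Lemma \ref{FxTS TH}. The crucial structural observation is that the modified dynamics \eqref{SP first order} moves the state in the \emph{same} direction as the nominal dynamics \eqref{SP asym dyn}, namely $-\tilde\nabla F(x,y)$, only rescaled by the positive state-dependent scalar $s(x,y) := c_1\|\nabla F\|^{-\frac{p_1-2}{p_1-1}} + c_2\|\nabla F\|^{-\frac{p_2-2}{p_2-1}}$. Since $\dot V = 2\bigl(\nabla_x F(x,\overline{y}(x))\cdot\dot x - \nabla_y F(\overline{x}(y),y)\cdot\dot y\bigr)$ is a linear functional of the velocity $(\dot x,\dot y)$ (by the Danskin-type differentiation already used in Theorem 1), and the velocity of \eqref{SP first order} equals $s(x,y)$ times the velocity of \eqref{SP asym dyn}, I immediately get $\dot V_{\eqref{SP first order}} = s(x,y)\,\dot V_{\eqref{SP asym dyn}}$.

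Invoking the bound established in the proof of Theorem 1, $\dot V_{\eqref{SP asym dyn}} \leq -(1 - 2c/\mu)\|\nabla F\|^2$ with $\mu = \min\{\mu_1,\mu_2\}$, I then obtain
\begin{equation*}
\dot V \leq -\Bigl(1 - \tfrac{2c}{\mu}\Bigr)\Bigl(c_1\|\nabla F\|^{2-\frac{p_1-2}{p_1-1}} + c_2\|\nabla F\|^{2-\frac{p_2-2}{p_2-1}}\Bigr).
\end{equation*}
A short exponent computation gives $2 - \frac{p_i-2}{p_i-1} = \frac{p_i}{p_i-1}$, and the two-sided PL inequality used in the form $\|\nabla F\|^2 \geq \mu V$ lets me replace $\|\nabla F\|$ by a power of $V$, yielding $\dot V \leq -pV^\alpha - qV^\beta$ with $\alpha = \frac{p_1}{2(p_1-1)}$, $\beta = \frac{p_2}{2(p_2-1)}$, and $p = (1-2c/\mu)c_1\mu^\alpha$, $q = (1-2c/\mu)c_2\mu^\beta$. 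The ranges $p_1 > 2$ and $1 < p_2 < 2$ place $\alpha \in (\tfrac12,1)$ and $\beta \in (1,\infty)$, exactly the hypotheses of Lemma \ref{FxTS TH}, while Assumption \ref{asuum: F} guarantees $p,q > 0$. Applying Lemma \ref{FxTS TH} then yields fixed-time convergence of $V$ to $0$, hence of $(x,y)$ to $(x^*,y^*)$, with settling time $T \leq \frac{1}{p(1-\alpha)} + \frac{1}{q(\beta-1)}$; since every inequality used holds on all of $\mathbb R^n\times\mathbb R^m$, the bound is uniform over all initial conditions.

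The step I expect to require the most care is not any single estimate but the \emph{well-posedness} of the argument at the equilibrium: because $p_1 > 2$, the scalar $s(x,y)$ \emph{diverges} as $\nabla F \to 0$, so one cannot naively evaluate $\dot V = s\,\dot V_{\mathrm{nom}}$ at $(x^*,y^*)$. The resolution is that the quantities actually appearing are the velocity $s\|\nabla F\| = c_1\|\nabla F\|^{1/(p_1-1)} + c_2\|\nabla F\|^{1/(p_2-1)}$ and the decay $s\|\nabla F\|^2 = c_1\|\nabla F\|^{p_1/(p_1-1)} + c_2\|\nabla F\|^{p_2/(p_2-1)}$, both of which carry strictly positive exponents and therefore vanish continuously as $(x,y)\to(x^*,y^*)$; this keeps the vector field continuous and $\dot V$ well-defined, with the singular $c_1$-term responsible for the finite-time ($\alpha<1$) behavior near the saddle point and the $c_2$-term providing the initial-condition-independent ($\beta>1$) decay far from it. I would state this continuity remark explicitly and otherwise treat the differentiation of $V$ (via $\overline{y}(x),\overline{x}(y)$) as already justified in Theorem 1.
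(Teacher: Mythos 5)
Your proposal is correct, and it reaches the paper's key differential inequality by a genuinely cleaner route than the paper's own proof. The paper proceeds by expanding $\dot V$ along \eqref{SP first order} directly, which forces it to re-bound the Danskin cross terms $\nabla_x F(x,\overline{y}(x))-\nabla_x F(x,y)$ and $\nabla_y F(\overline{x}(y),y)-\nabla_y F(x,y)$ --- each now weighted by both gradient-normalization factors --- using $\|\nabla_{x,y}F\|\leq c$ and Lemma \ref{conj: QB}, before collecting everything into $\dot V \leq -\alpha_1\|\nabla F\|^{\beta_1}-\alpha_2\|\nabla F\|^{\beta_2}$. You instead note that the FxTS-SPD field is the nominal field scaled by the positive scalar $s(x,y)$, so by linearity of the directional derivative $\dot V_{\mathrm{FxTS}} = s\,\dot V_{\mathrm{nom}}$, and Theorem 1's bound $\dot V_{\mathrm{nom}}\leq -(1-2c/\mu)\|\nabla F\|^2$ can simply be multiplied by $s$; this yields the same inequality with marginally more conservative constants (the paper retains the per-term factors $1-2c/\mu_1$ and $1-2c/\mu_2$, you use the uniform $1-2c/\mu$; both are positive under Assumption \ref{asuum: F}). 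From there the two arguments coincide exactly: the two-sided PL bound $\|\nabla F\|^2\geq\mu V$, the exponents $\tfrac{p_1}{2(p_1-1)}\in(\tfrac12,1)$ and $\tfrac{p_2}{2(p_2-1)}>1$, and Lemma \ref{FxTS TH}. Your factorization buys three things: it eliminates the bulk of the recomputation; it formalizes an observation the paper makes only empirically in its numerical section (that the two dynamics traverse the same curve and differ only in their time parameterization); and it forces you to confront --- as the paper does not --- the well-posedness issue that $s$ diverges as $\nabla F\to 0$, which you correctly resolve by noting that the quantities actually appearing, $s\|\nabla F\|$ and $s\|\nabla F\|^2$, carry strictly positive exponents, and that Lemma \ref{FxTS TH} only requires the inequality away from the equilibrium.
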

\noindent The proof is given in Appendix \ref{app: proof main thm}. 

\begin{Remark}
Note that the difference between the proposed FxTS-SPD \eqref{SP first order} and the nominal-SPD \eqref{SP asym dyn} are the exponents $p_1>2$ and $1<p_2<2$. In the particular (limiting) case of $p_1 = p_2 = 2$, the modified SPD reduces to the nominal-SPD. Intuitively, compared to the asymptotic convergence condition $\dot V\leq -\alpha V$, terms $V^\beta$ and $V^\alpha$ in \eqref{eq: dot V proof} dominate the linear term $V$ when it is large and small, respectively, resulting in accelerated convergence for both small and large initial distance from the equilibrium point. Thus, the FxTS-SPD achieves faster convergence from any initial condition.
\end{Remark}

%\begin{Remark}
%Using results from \cite{garg2021MVIP}, it can be readily shown that a simple discretization, such as Euler discretization, leads to fixed-number-of-steps (iterations) convergence in the discrete-time domain. Thus, the proposed method can be readily translated into a discrete-time algorithm with guaranteed convergence to the saddle point within a fixed number of iterations. This leads to a much faster convergence rate than state-of-the-art algorithms that still take infinite iterations for convergence. 
%\end{Remark}

\section{Numerical case studies}
We now present numerical experiments on robust least square estimation, as well as study convergence behavior for a toy example. While continuous-time algorithms are useful from the point of view of analysis, these algorithms are implemented in an iterative, discrete-time manner. For the sake of implementation, we use forward-Euler discretization for both the FxTS-SPD and the nominal-SPD with timescale separation~\cite{fiez2020gradient}, i.e., the gradient ascent dynamics is discretized at a faster rate than the gradient descent dynamics. The algorithms were implemented using PyTorch 0.4.1 on a 16GB Core-i7 2.8GHz
CPU.

We first present the convergence analysis of the proposed FxTS-SPD on a simple two-dimensional nonconvex-nonconcave function that satisfies the two-sided PL inequality. In particular, we consider evaluating the saddle point of the example function $F(x,y)=x^2+3\sin^2x\sin^2y - 4y^2 -10\sin^2y$. We also compare the convergence behavior of the FxTS-SPD against the nominal-SPD (without gradient normalization). Figure~\ref{fig:anim} represents snapshots of the trajectories of the FxTS-SPD (magenta) amd the nominal-SPD (black) at various iterations. The two trajectories are initialized at (2,2) and (-2,-2), respectively. Recall that both initializations are symmetric for the test function. It can be seen that the FxTS-SPD converges to the unique saddle point $(0,0)$ of $F(x,y)$ by the end of 70 iterations, while the nominal-SPD continues to require further computations and does not converge even by the end of 150 iterations. It can further be observed that the paths traversed by both these optimization algorithms are identical (assuming they start from the same initial conditions). This reaffirms the fact that while the two algorithms traverse the same curve, the proposed FxTS-SPD traverses the curve much faster than its nominal counterpart. Thus, this clever reparameterization of the curve through gradient normalization significantly expedites the optimization process{\footnote{Full video available at: \url{https://tinyurl.com/2wpmyf3h}}}.

\begin{figure*}[!ht]
	\begin{center}
		\begin{tabular}{cc}
			\includegraphics[width=0.7\columnwidth]{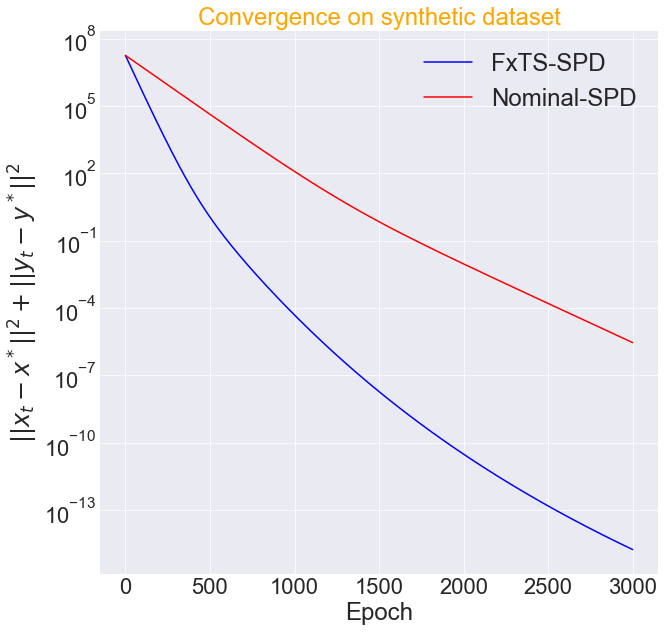}&
			\includegraphics[width=0.7\columnwidth]{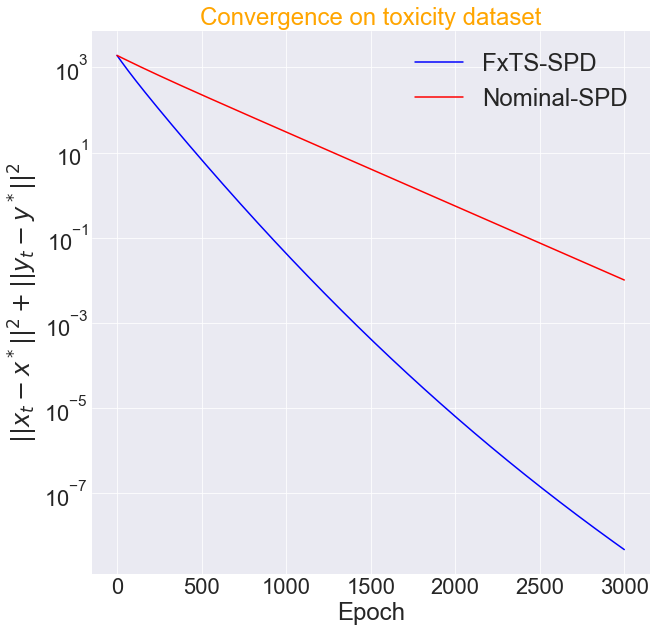}\cr
			(a)&(b)
		\end{tabular}
		\caption{Comparison of the convergences of FxTS-SPD and normal-SPD on two datasets: (a) Synthetic dataset, (b) Toxicity dataset~\cite{cassotti2014prediction}.}
		\label{fig:compare}
	\end{center}
	\vspace{-1.5em}
\end{figure*}

The second set of experiments concern with the problem of robust least square (RLS) estimation~\ref{ex:RLS}. The RLS minimizes the worst case residual given the bounded deterministic perturbation $\delta$ on the noisy measurement vector $y_0\in\mathbb{R}^m$, and the measurement matrix $A\in\mathbb{R}^{m\times n}$. The RLS can be formulated as:
\begin{align*}
    \min\limits_{x\in\mathbb{R}^n}\max\limits_{\delta:\|\delta\|\leq\rho} \|Ax-y\|^2, \qquad \text{where} \quad \delta=y_0-y.
\end{align*}
We consider the soft formulation (see Example~\ref{ex:RLS}). %:
%\begin{align*}
%    \min\limits_{x\in\mathbb{R}^n}\max\limits_{y\in\mathbb{R}^m} F(x,y)\coloneqq \|Ax-y\|_M^2 - \lambda \|y-y_0\|_M^2,
%\end{align*}
%for some positive semi-definite matrix $M$.
Note that for $\lambda>1$, $F(x,y)$ satisfies the two-sided PL inequality, since $F(x,y)$ can be written as a combination of an affine function and a strongly-convex--strongly-concave function. Within RLS estimation, we consider two scenarios: (a) Synthetic dataset: We generate the measurement matrix $A$ with $n=1000$ and $m=500$ by sampling its rows from a normal distribution $\mathcal{N}(0,I_n)$. The noisy measurement $y_0$ is set to $Ax^*+\varepsilon$ for the true signal $x^*$ and an $\varepsilon$ sampled from $\mathcal{N}(0,0.01)$. We use $M=I_n, \lambda=3$ for this dataset. (b) Aquatic toxicity dataset~\cite{cassotti2014prediction}: The dataset was used to predict acute aquatic toxicity of 546 molecules towards Daphnia Magna from 8 descriptors. Here, we set $M=I_n, \lambda=2$.

Figures~\ref{fig:compare}a and \ref{fig:compare}b depict the convergence behaviors of the FxTS-SPD and the nominal-SPD for the two datasets, respectively. We use distance $\|(x_t,y_t)-(x^*,y^*)\|^2$ to the limit point $(x^*,y^*)$ as a metric to compare rates of convergence. In both the scenarios, the convergence rate of the FxTS-SPD is orders of magnitude faster than the nominal-SPD.

\section{Conclusions}
In this paper, we proposed a method of solving a subclass of nonconvex-nonconcave min-max problem within a fixed amount of time. We showed our results under the relaxed assumption of two-sided PL inequality, which is a weaker condition as compared to the commonly used assumptions of strong or strict convexity-concavity. Numerical case studies illustrate significant improvement in convergence performance compared to the nominal-SPD.

\bibliographystyle{IEEEtran}
\bibliography{myreferences}

% Generated by IEEEtran.bst, version: 1.14 (2015/08/26)
\begin{thebibliography}{10}
\providecommand{\url}[1]{#1}
\csname url@samestyle\endcsname
\providecommand{\newblock}{\relax}
\providecommand{\bibinfo}[2]{#2}
\providecommand{\BIBentrySTDinterwordspacing}{\spaceskip=0pt\relax}
\providecommand{\BIBentryALTinterwordstretchfactor}{4}
\providecommand{\BIBentryALTinterwordspacing}{\spaceskip=\fontdimen2\font plus
\BIBentryALTinterwordstretchfactor\fontdimen3\font minus
  \fontdimen4\font\relax}
\providecommand{\BIBforeignlanguage}[2]{{%
\expandafter\ifx\csname l@#1\endcsname\relax
\typeout{** WARNING: IEEEtran.bst: No hyphenation pattern has been}%
\typeout{** loaded for the language `#1'. Using the pattern for}%
\typeout{** the default language instead.}%
\else
\language=\csname l@#1\endcsname
\fi
#2}}
\providecommand{\BIBdecl}{\relax}
\BIBdecl

\bibitem{razaviyayn2020nonconvex}
M.~Razaviyayn, T.~Huang, S.~Lu, M.~Nouiehed, M.~Sanjabi, and M.~Hong,
  ``Nonconvex min-max optimization: Applications, challenges, and recent
  theoretical advances,'' \emph{IEEE Signal Processing Magazine}, vol.~37,
  no.~5, pp. 55--66, 2020.

\bibitem{fiez2020gradient}
T.~Fiez and L.~Ratliff, ``Gradient descent-ascent provably converges to strict
  local minmax equilibria with a finite timescale separation,'' \emph{arXiv
  preprint arXiv:2009.14820}, 2020.

\bibitem{feijer2010stability}
D.~Feijer and F.~Paganini, ``Stability of primal-dual gradient dynamics and
  applications to network optimization,'' \emph{Automatica}, vol.~46, no.~12,
  pp. 1974--1981, 2010.

\bibitem{goodfellow2014generative}
I.~Goodfellow, J.~Pouget-Abadie, M.~Mirza, B.~Xu, D.~Warde-Farley, S.~Ozair,
  A.~Courville, and Y.~Bengio, ``Generative adversarial nets,'' \emph{Advances
  in neural information processing systems}, vol.~27, 2014.

\bibitem{madras2018learning}
D.~Madras, E.~Creager, T.~Pitassi, and R.~Zemel, ``Learning adversarially fair
  and transferable representations,'' in \emph{International Conference on
  Machine Learning}.\hskip 1em plus 0.5em minus 0.4em\relax PMLR, 2018, pp.
  3384--3393.

\bibitem{cherukuri2017saddle}
A.~Cherukuri, B.~Gharesifard, and J.~Cortes, ``Saddle-point dynamics:
  Conditions for asymptotic stability of saddle points,'' \emph{SIAM Journal on
  Control and Optimization}, vol.~55, no.~1, pp. 486--511, 2017.

\bibitem{garg2018fixed}
K.~Garg and D.~Panagou, ``Fixed-time stable gradient flows: Applications to
  continuous-time optimization,'' \emph{IEEE Transactions on Automatic
  Control}, vol.~66, no.~5, pp. 2002--2015, 2020.

\bibitem{wibisono2016variational}
A.~Wibisono, A.~C. Wilson, and M.~I. Jordan, ``A variational perspective on
  accelerated methods in optimization,'' \emph{Proceedings of the National
  Academy of Sciences}, vol. 113, no.~47, pp. E7351--E7358, 2016.

\bibitem{qu2019exponential}
G.~Qu and N.~Li, ``On the exponential stability of primal-dual gradient
  dynamics,'' \emph{IEEE Control Systems Letters}, vol.~3, no.~1, pp. 43--48,
  2019.

\bibitem{dhingra2018proximal}
N.~K. Dhingra, S.~Z. Khong, and M.~R. Jovanovi{\'c}, ``The proximal augmented
  lagrangian method for nonsmooth composite optimization,'' \emph{IEEE
  Transactions on Automatic Control}, vol.~64, no.~7, pp. 2861--2868, 2018.

\bibitem{karimi2016linear}
H.~Karimi, J.~Nutini, and M.~Schmidt, ``Linear convergence of gradient and
  proximal-gradient methods under the {P}olyak- {\l}ojasiewicz condition,'' in
  \emph{Joint European Conference on Machine Learning and Knowledge Discovery
  in Databases}.\hskip 1em plus 0.5em minus 0.4em\relax Springer, 2016, pp.
  795--811.

\bibitem{yang2020global}
J.~Yang, N.~Kiyavash, and N.~He, ``Global convergence and variance reduction
  for a class of nonconvex-nonconcave minimax problems,'' \emph{Advances in
  Neural Information Processing Systems}, vol.~33, pp. 1153--1165, 2020.

\bibitem{dassampling}
\BIBentryALTinterwordspacing
A.~Das, B.~Sch{\"o}lkopf, and M.~Muehlebach, ``Sampling without replacement
  leads to faster rates in finite-sum minimax optimization,'' 2022, under
  review. [Online]. Available:
  \url{https://aniket1998.github.io/files/random_reshuffling_minimax.pdf}
\BIBentrySTDinterwordspacing

\bibitem{bhat2000finite}
S.~P. Bhat and D.~S. Bernstein, ``Finite-time stability of continuous
  autonomous systems,'' \emph{SIAM Journal of Control and Optimization},
  vol.~38, no.~3, pp. 751--766, 2000.

\bibitem{polyakov2012nonlinear}
A.~Polyakov, ``Nonlinear feedback design for fixed-time stabilization of linear
  control systems,'' \emph{IEEE Transactions on Automatic Control}, vol.~57,
  no.~8, pp. 2106--2110, 2012.

\bibitem{garg2020fixed}
K.~Garg, M.~Baranwal, and D.~Panagou, ``A fixed-time convergent distributed
  algorithm for strongly convex functions in a time-varying network,'' in
  \emph{2020 59th IEEE Conference on Decision and Control (CDC)}.\hskip 1em
  plus 0.5em minus 0.4em\relax IEEE, 2020, pp. 4405--4410.

\bibitem{garg2021MVIP}
K.~Garg, M.~Baranwal, R.~Gupta, and M.~Benosman, ``Fixed-time stable proximal
  dynamical system for solving mvips,'' \emph{arXiv preprint arXiv:1908.03517},
  2019.

\bibitem{el1997robust}
L.~El~Ghaoui and H.~Lebret, ``Robust solutions to least-squares problems with
  uncertain data,'' \emph{SIAM Journal on matrix analysis and applications},
  vol.~18, no.~4, pp. 1035--1064, 1997.

\bibitem{hassan2019proximal}
S.~Hassan-Moghaddam and M.~R. Jovanovi{\'c}, ``Proximal gradient flow and
  douglas--rachford splitting dynamics: global exponential stability via
  integral quadratic constraints,'' \emph{Automatica}, vol. 123, p. 109311,
  2021.

\bibitem{cassotti2014prediction}
M.~Cassotti, D.~Ballabio, V.~Consonni, A.~Mauri, I.~V. Tetko, and
  R.~Todeschini, ``Prediction of acute aquatic toxicity toward daphnia magna by
  using the {G}{A}-k{N}{N} method,'' \emph{Alternatives to Laboratory Animals},
  vol.~42, no.~1, pp. 31--41, 2014.

\end{thebibliography}

\appendices

\section{Proof of Lemma \ref{conj: QB}}\label{app: proof Lemma QB}
\begin{proof}
    For the sake of brevity, we only prove the first implication~\eqref{eq:Ineq1}. The proof for the second implication follows similarly. Let us define $h_y(x)\coloneqq\sqrt{F(x,y)-F(\overline{x}(y),y)}$. Clearly, by definition, $h_y(x)\geq 0$ for all $(x,y)\in\mathbb{R}^n\times\mathbb{R}^m$. The partial derivative of $h_y(x)$ for all $x\neq\overline{x}(y)$ is given by:
    \begin{align}\label{eq:h_der}
        \nabla_xh_y(x) &= \frac{\nabla_xF(x,y)}{2\sqrt{F(x,y)-F(\overline{x}(y),y)}} \nonumber \\
        \implies\!\!\|\nabla_xh_y(x)\|^2 &= \frac{\|\nabla_xF(x,y)\|^2}{4\left(F(x,y)-F(\overline{x}(y),y)\right)}\!\geq\! \frac{\mu_1}{2},
    \end{align}
    where the last inequality follows from the two-sided PL inequality. For any point $(x_0,y)$ with $x_0\neq \overline{x}(y)$, consider solving the following differential equation:
    \begin{align}\label{eq:ODE}
        \frac{dx(t)}{dt} = -\nabla_xh_y(x(t)) \ \ \text{with} \ x(t=0)=x_0.
    \end{align}
    Recall that $h_y(x)$ is a positive invex function bounded from below by 0, while the derivative $\nabla_xh_y(x)$, too, is bounded from below by a positive constant. Thus, it must be that by moving along the path defined by~\eqref{eq:ODE}, the trajectory will eventually reach $\overline{x}(y)$ in some finite time $T>0$. Thus,
    \begin{align}\label{eq:h_ineq}
        h_y(x_0) - h_y(x(T)) &= -\int\limits_{x_0}^{x(T)}\langle\nabla_xh_y(x),dx\rangle \nonumber \\
        &= -\int\limits_{0}^{T}\left\langle\nabla_xh_y(x),\frac{dx}{dt}\right\rangle dt \nonumber \\
        &= \int\limits_{0}^{T}\|\nabla_xh_y(x)\|^2dt \geq \frac{\mu_1}{2}T,
    \end{align}
    where the last inequality follows from~\eqref{eq:h_der}. Since, $x(T)=\overline{x}(y)$, i.e., $ h_y(x(T))=0$, it follows that $T\leq 2h_y(x_0)\big/\mu_1$. The \emph{length} of the orbit $x(\cdot)$ defined by~\eqref{eq:ODE} starting at $x_0$ is:
    \begin{align}\label{eq:Lx0}
      \mathcal{L}_y(x_0) \coloneqq \int\limits_{0}^{T}\|dx\big/dt\|dt = \int\limits_{0}^{T}\!\|\nabla_xh_y(x)\|dt \geq \|x_0-\overline{x}(y)\|,
    \end{align}
    where the last inequality follows from the fact that the path-length must be greater than or equal to straight line distance. Revisiting~\eqref{eq:h_ineq}, we obtain:
    \begin{align}\label{eq:h_fin}
        h_y(x_0) - h_y(x(T)) &= \int\limits_{0}^{T}\|\nabla_xh_y(x)\|^2dt \nonumber \\
        &\stackrel{\eqref{eq:h_ineq}}{\geq}\sqrt{\frac{\mu_1}{2}}\int\limits_{0}^{T}\|\nabla_xh_y(x)\|dt \nonumber \\
        &\stackrel{\eqref{eq:Lx0}}{\geq}\sqrt{\frac{\mu_1}{2}}\|x_0-\overline{x}(y)\|.
    \end{align}
    Since $h_y(x(T))=0$,~\eqref{eq:h_fin} implies that $ F(x_0,y)-F(\overline{x}(y),y) \geq \frac{\mu_1}{2}\|x_0-\overline{x}(y)\|^2$, which using the two-sided PL inequality reduces to $\|\nabla_xF(x_0,y)\| \geq \mu_1\|x_0-\overline{x}(y)\|$, implying component-wise quadratic growth.
\end{proof}

\section{Proof of Theorem \ref{thm: FxTS SP PL}}\label{app: proof main thm}
\begin{proof}
Consider the Lyapunov candidate \small{
\begin{align*}
    V(x,y)\!=\! &\left(\!\max_yF(x,y)\!-\!F(x^*\!,y^*)\!\right)\!+\!\left(\!\max_yF(x,y)\!-\!F(x,y)\!\right)\nonumber\\
    &\left(F(x^*,y^*)\!-\!\min_xF(x,y)\right)\!+\!\left(F(x,y)\!-\!\min_xF(x,y)\right)\!,
\end{align*}}\normalsize
and take its time derivative along \eqref{SP first order} to obtain
\small{
\begin{align*}
  \dot V = & 2\nabla_xF(x,\overline{y}(x))\dot x-\nabla_xF(x,y)\dot x-\nabla_yF(x,y)\dot z\\
    & -(2\nabla_yF(\overline{x}(y),y)\dot z-\nabla_xF(x,y)\dot x-\nabla_yF(x,y)\dot y).
\end{align*}}\normalsize
Substituting the dynamics from \eqref{SP first order} and performing routine calculation, we obtain
\begin{align*}
    \dot V = & -c_1\frac{\|\nabla F\|^2}{\|\nabla F(x,y)\|^\frac{p_1-2}{p_1-1}}-c_2\frac{\|\nabla F\|^2}{\|\nabla F(x,y)\|^\frac{p_2-2}{p_2-1}}\\
    & -2c_1\left(\nabla_x F(x,\overline{y}(x))-\nabla_x F(x,y)\right)^\intercal\frac{\nabla_x F(x,y)}{\|\nabla F(x,y)\|^\frac{p_1-2}{p_1-1}}\\
    & -2c_2\left(\nabla_x F(x,\overline{y}(x))-\nabla_x F(x,y)\right)^\intercal\frac{\nabla_x F(x,y)}{\|\nabla F(x,y)\|^\frac{p_2-2}{p_2-1}}\\
    & + 2c_1\left(\nabla_y F(\overline{x}(y),y)-\nabla_y F(x,y)\right)^\intercal\frac{\nabla_y F(x,y)}{\|\nabla F(x,y)\|^\frac{p_1-2}{p_1-1}}\\
    & + 2c_2\left(\nabla_y F(\overline{x}(y),y)-\nabla_y F(x,y)\right)^\intercal\frac{\nabla_y F(x,y)}{\|\nabla F(x,y)\|^\frac{p_2-2}{p_2-1}}.
\end{align*}
Now, under Assumption \ref{asuum: F}, using Lipschitz continuity of the gradients $\nabla_x F$ and $\nabla_y F$, we obtain {\small
\begin{align*}
    \dot V \leq & -c_1\frac{\|\nabla F(x,y)\|^2}{\|\nabla F(x,y)\|^\frac{p_1-2}{p_1-1}}-c_2\frac{\|\nabla F(x,y)\|^2}{\|\nabla F(x,y)\|^\frac{p_2-2}{p_2-1}}\\
    & +2cc_1\frac{\|\nabla_x F(x,y)\|\|x-\overline{x}(y)\|}{\|\nabla F(x,y)\|^\frac{p_1-2}{p_1-1}}\\
    & + 2cc_2\frac{\|\nabla_x F(x,y)\|\|x-\overline{x}(y)\|}{\|\nabla F(x,y)\|^\frac{p_2-2}{p_2-1}}\\
    & + 2cc_1\frac{\|\nabla_y F(x,y)\|\|z-\overline{y}(x)\|}{\|\nabla F(x,y)\|^\frac{p_1-2}{p_1-1}}\\
    & + 2cc_2\frac{\|\nabla_y F(x,y)\|\|z-\overline{y}(x)\|}{\|\nabla F(x,y)\|^\frac{p_2-2}{p_2-1}},
\end{align*}}\normalsize
where $c = \sup_{x,y}\|\nabla_{x,y}F(x,y)\|$. Using Lemma \ref{conj: QB}, we can further upper-bound the RHS of $\dot V$ as \small{
\begin{align*}
    \dot V \leq & -c_1\frac{\|\nabla F(x,y)\|^2}{\|\nabla F(x,y)\|^\frac{p_1-2}{p_1-1}}-c_2\frac{\|\nabla F(x,y)\|^2}{\|\nabla F(x,y)\|^\frac{p_2-2}{p_2-1}}\\
    & +\frac{2cc_1}{\mu_1}\frac{\|\nabla_x F(x,y)\|^2}{\|\nabla F(x,y)\|^\frac{p_1-2}{p_1-1}} +\frac{2cc_2}{\mu_1}\frac{\|\nabla_x F(x,y)\|^2}{\|\nabla F(x,y)\|^\frac{p_2-2}{p_2-1}}\\
    & +\frac{2cc_1}{\mu_2}\frac{\|\nabla_y F(x,y)\|^2}{\|\nabla F(x,y)\|^\frac{p_1-2}{p_1-1}} + \frac{2cc_1}{\mu_2}\frac{\|\nabla_y F(x,y)\|^2}{\|\nabla F(x,y)\|^\frac{p_2-2}{p_2-1}}.
\end{align*}}\normalsize
Thus, it follows that
\small{
\begin{align*}
    \dot V \leq & -c_1\frac{\|\nabla F(x,y)\|^2}{\|\nabla F(x,y)\|^\frac{p_1-2}{p_1-1}}-c_2\frac{\|\nabla F(x,y)\|^2}{\|\nabla F(x,y)\|^\frac{p_2-2}{p_2-1}}\\
    & +\frac{2cc_1}{\mu_1}\frac{\|\nabla F(x,y)\|^2}{\|\nabla F(x,y)\|^\frac{p_1-2}{p_1-1}} +\frac{2cc_2}{\mu_1}\frac{\|\nabla F(x,y)\|^2}{\|\nabla F(x,y)\|^\frac{p_2-2}{p_2-1}} \\
    = & -c_1\left(1-\frac{2c}{\mu_1}\right)\frac{\|\nabla F(x,y)\|^2}{\|\nabla F(x,y)\|^\frac{p_1-2}{p_1-1}}\\
    & -c_2\left(1-\frac{2c}{\mu_2}\right)\frac{\|\nabla F(x,y)\|^2}{\|\nabla F(x,y)\|^\frac{p_2-2}{p_2-1}}.
\end{align*}}\normalsize
Let $\alpha_1 = c_1\left(1-\frac{2c}{\mu_1}\right)$, $\alpha_2 = c_2\left(1-\frac{2c}{\mu_2}\right)$, $\beta_1 = 2-\frac{p_1-2}{p_1-1}$ and $\beta_2 =2- \frac{p_2-2}{p_2-1}$ to obtain
\begin{align*}
    \dot V \leq -\alpha_1 \|\nabla F(x,y)\|^{\beta_1}-\alpha_2 \|\nabla F(x,y)\|^{\beta_2}.
\end{align*}
Per Assumption \ref{asuum: F}, it holds that $\alpha_1, \alpha_2>0$. 
Finally, using two-sided PL property, we obtain that
\begin{align*}
    V(x,y) \leq & \frac{1}{\mu_1}\|\nabla_xF(x,y)\|^2 + \frac{1}{\mu_2}\|\nabla_yF(x,y)\|^2\\
    \leq & \frac{1}{\mu} \|\nabla F(x,y)\|^2,
\end{align*}
where $\mu = \min\{\mu_1,\mu_2\}$. Thus, it holds that
\begin{align}\label{eq: dot V proof}
    \dot V(x,y) \leq -a_1V(x,y)^{b_1}-a_2V(x,y)^{b_2},
\end{align}
where $a_1 = \alpha_1(\mu)^{\frac{\beta_1}{2}}, a_2 = \alpha_2(\mu)^{\frac{\beta_2}{2}}$, $0<b_1 = \frac{\beta_1}{2}<1$ and $b_1 = \frac{\beta_2}{2}>1$. Thus, per Lemma \ref{FxTS TH}, it holds that $V\to 0$ within a fixed time $T<\infty$. Using the positive-definiteness of the function $V$, it holds that the solutions $(x(t), y(t))$ of \eqref{SP first order} converge to the saddle point $(x^*, y^*)$ within time $T$.  
\end{proof}

\end{document}